\documentclass[11pt]{article}
\usepackage{amssymb}
\oddsidemargin   = 0 cm \evensidemargin  = 0 cm \textwidth  = 14
cm \textheight = 22 cm \headheight=0cm \topskip=0cm \topmargin=0cm
\newtheorem{precor}{{\bf Corollary}}

\newtheorem{precon}{{\bf Conjecture}}

\newtheorem{prealphcon}{{\bf Conjecture}}

\newtheorem{predefin}{{\bf Definition}}

\newtheorem{preexm}{{\bf Example}}

\newtheorem{preappl}{{\bf Application}}

\newtheorem{prelem}{{\bf Lemma}}

\newtheorem{preproof}{{\bf Proof.\ }}

\newenvironment{proof}[1]{\begin{preproof}{\rm
               #1}\hfill{$\blacksquare$}}{\end{preproof}}
\newtheorem{pretheorem}{{\bf Theorem}}

\newenvironment{theorem}{\begin{pretheorem}{\hspace{-0.5
               em}{\bf.\ }}}{\end{pretheorem}}
\newtheorem{prealphtheorem}{{\bf Theorem}}

\newtheorem{prealphlem}{{\bf Lemma}}

\newtheorem{prepro}{{\bf Proposition}}

\newtheorem{preprb}{{\bf Problem}}

\newtheorem{prerem}{{\bf Remark}}

\newtheorem{preapp}{{\bf Application}}

\newtheorem{prequ}{{\bf Question}}

%

\def\conct[#1,#2]{\mbox {${#1} \leftrightarrow {#2}$}}
\def\dconct[#1,#2]{\mbox {${#1} \rightarrow {#2}$}}
\def\deg[#1,#2]{\mbox {$d_{_{#1}}(#2)$}}
\def\mindeg[#1]{\mbox {$\delta_{_{#1}}$}}
\def\maxdeg[#1]{\mbox {$\Delta_{_{#1}}$}}
\def\outdeg[#1,#2]{\mbox {$d_{_{#1}}^{^+}(#2)$}}
\def\minoutdeg[#1]{\mbox {$\delta_{_{#1}}^{^+}$}}
\def\maxoutdeg[#1]{\mbox {$\Delta_{_{#1}}^{^+}$}}
\def\indeg[#1,#2]{\mbox {$d_{_{#1}}^{^-}(#2)$}}
\def\minindeg[#1]{\mbox {$\delta_{_{#1}}^{^-}$}}
\def\maxindeg[#1]{\mbox {$\Delta_{_{#1}}^{^-}$}}

\def\dre[#1,#2,#3]{\mbox {${\cal E}^{^{#3}}(#1,#2)$}}
\def\var[#1,#2]{\mbox {${\rm Var}_{_{#1}}(#2)$}}
\def\ls[#1]{\mbox {$\xi^{^{#1}}$}}
\def\hom[#1,#2]{\mbox {${\rm Hom}({#1},{#2})$}}
\def\onvhom[#1,#2]{\mbox {${\rm Hom^{v}}(#1,#2)$}}
\def\onehom[#1,#2]{\mbox {${\rm Hom^{e}}(#1,#2)$}}
\def\core[#1]{\mbox {$#1^{^{\bullet}}$}}
\def\cay[#1,#2]{\mbox {${\rm Cay}({#1},{#2})$}}
\def\sch[#1,#2,#3]{\mbox {${\rm Sch}({#1},{#2},{#3})$}}
\def\cays[#1,#2]{\mbox {${\rm Cay_{s}}({#1},{#2})$}}
\def\dirc[#1]{\mbox {$\stackrel{\rightarrow}{C}_{_{#1}}$}}
\def\cycl[#1]{\mbox {${\bf Z}_{_{#1}}$}}

\begin{document}

\begin{center} 
{\Large \bf A note on b-coloring of Kneser graphs}\\
\vspace{0.3 cm}
{\bf Saeed Shaebani}\\
{\it School of Mathematics and Computer Science}\\
{\it Damghan University}\\
{\it P.O. Box {\rm 36716-41167}, Damghan, Iran}\\
{\tt shaebani@du.ac.ir}\\ \ \\
\end{center}
\begin{abstract}
\noindent In this short note, the purpose is to provide an upper bound
for the b-chromatic number of Kneser graphs. Our bound improves
the upper bound that was presented by Balakrishnan and Kavaskar in
[b-coloring of Kneser graphs, {\em Discrete Appl. Math.}\ {\bf
160} (2012), 9-14].\\

\noindent {\bf Keywords:}\ {b-coloring, b-chromatic number, Kneser graph.}\\

\noindent {\bf Mathematics Subject Classification: 05C15}
\end{abstract}
\section{Introduction}

In this note, simple graphs whose vertex sets are
nonempty and finite are considered. Let $G$ be a graph with vertex 
set $V(G)$. A {\it coloring} of $G$ stands for a function $f:V(G)\rightarrow C$ such
that for each $c$ in $C$, the set $f^{-1}(c)$ is independent; in
this case, we think of each $c$ in $C$ as a {\it color} and call
$f^{-1}(c)$ a {\it color class} of $f$.

Let $G$ be a graph and $f:V(G)\rightarrow C$ be a coloring of $G$. 
The vertex $v$ of $G$ is said to be a {\it color-dominating} vertex with
respect to $f$ if $f(N[v])=C$, i.e., the vertex $v$ has all 
colors in its closed neighborhood. Also, the coloring $f:V(G)\rightarrow C$
is called a {\it b-coloring} of $G$ whenever
each of its color classes contains at least one color-dominating vertex.
The {\it b-chromatic number} of $G$, denoted by $\varphi (G)$, is defined to 
be the maximum positive integer $k$ for which $G$ admits a b-coloring 
$f:V(G)\rightarrow C$ with $|C|=k$. This concept was introduced by Irving and 
Manlove in $1999$ in \cite{irv}, and since then there exists an extensive literature 
on it; see \cite{survey} for a survey. 

Suppose that $n$ and $m$ are positive integers and $n\geq m$. 
The {\it Kneser graph} $KG(n,m)$ is the graph whose vertex set 
is the set of all $m$-subsets of $\{1,2,\ldots ,n\}$, in which two 
vertices $A$ and $B$ are declared to be adjacent iff $A \cap B = \varnothing$.
In \cite{bala,haji,jav,sha}, b-coloring of Kneser graphs has been investigated.

Every $d$-regular graph $G$ satisfies $\varphi (G) \leq d+1$ \cite{irv}.
Kratochv\'{\i}l, Tuza, and Voigt \cite{kratv} showed that for each $d$
there are only finitely many $d$-regular graphs up to
isomorphism whose b-chromatic numbers are less than or equal to $d$. 
So, finding such regular graphs is of interest. In this regard, Balakrishnan and Kavaskar
\cite{bala} presented some desired Kneser graphs meeting this property.
\begin{theorem}{ \cite{bala} \label{bala} Let $n \geq 2$ and $i \geq 0$.
Also, let $d$ be the degree-regularity of the Kneser graph $G=KG(2n+k,n)$. If
$|V(G)| \leq 2d+2-2i$, then $\varphi (G) \leq d-i$.
}
\end{theorem}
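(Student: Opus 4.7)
I would proceed by contradiction. Suppose $\varphi(G) \geq t := d - i + 1$ and fix a b-coloring of $G$ using $t$ colors together with color-dominating vertices $v_1, \ldots, v_t$, one per color class.

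The first step is a pairwise union bound. Since $N[v_j] \cup N[v_l] \subseteq V(G)$ and $|N[v_j]| = |N[v_l]| = d + 1$, inclusion-exclusion combined with the hypothesis $|V(G)| \leq 2d + 2 - 2i$ yields, for every $j \neq l$,
$$|N[v_j] \cap N[v_l]| \geq 2d + 2 - |V(G)| \geq 2i.$$

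The second step exploits the Kneser structure to evaluate this intersection. Viewing each $v_j$ as an $n$-subset of $[2n+k]$, a direct computation in $KG(2n+k, n)$ shows that if $v_j \cap v_l = \varnothing$ (so $v_j \sim v_l$), then $N[v_j] \cap N[v_l]$ consists of $v_j$, $v_l$ and the $n$-subsets of $[2n+k] \setminus (v_j \cup v_l)$, giving $|N[v_j] \cap N[v_l]| = 2 + \binom{k}{n}$. If instead $s := |v_j \cap v_l| \geq 1$, then neither $v_j$ nor $v_l$ lies in the intersection and $|N[v_j] \cap N[v_l]| = \binom{k+s}{n}$. Thus for each pair $j \neq l$, either the adjacent case holds with $\binom{k}{n} \geq 2(i-1)$, or the non-adjacent case holds with $\binom{k+s}{n} \geq 2i$ and $s \geq 1$.

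The third step combines these constraints with the b-coloring structure. When $\binom{k}{n} < 2(i-1)$ the adjacent case is forbidden, so $\{v_1, \ldots, v_t\}$ is an intersecting family of $n$-subsets of $[2n+k]$, and the Erd\H{o}s--Ko--Rado theorem gives $t \leq \binom{2n+k-1}{n-1}$, which under the hypothesis should contradict $t = \binom{n+k}{n} - i + 1$. In the general regime I would additionally use the fact that each induced subgraph $G[N(v_j)]$ is isomorphic to $KG(n+k, n)$ and is partitioned by the $t - 1$ color classes $V_{c_l} \cap N(v_j)$ ($l \neq j$), each an intersecting family of $n$-subsets of $\overline{v_j}$. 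Applying the EKR bound to each such piece and summing over $j$ should refine the estimate enough to force $|V(G)| > 2d + 2 - 2i$, the desired contradiction.

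The main obstacle will be Step 3 in the general regime where $\binom{k}{n} \geq 2i - 2$: the pairwise intersection inequalities from Step 2 are then easily satisfied, so the contradiction has to come from carefully combining them with the intra-neighborhood EKR bounds and the exact expressions $d = \binom{n+k}{n}$ and $|V(G)| = \binom{2n+k}{n}$.
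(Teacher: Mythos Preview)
This theorem is quoted from \cite{bala} and the present paper gives no proof of it; there is nothing here to compare your argument against. The paper only \emph{uses} Theorem~\ref{bala} as a benchmark, and then supersedes it with Theorem~\ref{Themain} via a completely different counting argument (classifying color classes $S$ by whether $S^{\circ}=\bigcap_{A\in S}A$ is empty).

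That said, your proposal is not a proof but a strategy, and the gap you yourself flag in Step~3 is real. Steps~1 and~2 are fine: for any two dominating vertices $v_j,v_l$ the hypothesis forces $|N[v_j]\cap N[v_l]|\ge 2i$, and in $KG(2n+k,n)$ this intersection has size $2+\binom{k}{n}$ or $\binom{k+s}{n}$ according as $v_j\cap v_l=\varnothing$ or $|v_j\cap v_l|=s\ge 1$. But these inequalities point the \emph{wrong} way for a contradiction: they are \emph{consequences} of the hypothesis, automatically satisfied, and impose no further restriction on the dominating set once $\binom{k}{n}\ge 2i-2$. So in the generic regime your pairwise data carries no information at all. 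The suggested rescue---applying Erd\H{o}s--Ko--Rado inside each $N(v_j)\cong KG(n+k,n)$ to the traces of the other color classes---only yields that each such trace has size at most $\binom{n+k-1}{n-1}$, and summing gives $(t-1)\binom{n+k-1}{n-1}\ge d=\binom{n+k}{n}$, which is vastly weaker than $t\le d-i$ and nowhere uses~$i$. As it stands there is no mechanism in your outline that ties the lower bound $2i$ on pairwise neighborhood overlaps to an upper bound on the number $t$ of colors, so Step~3 is not merely incomplete but lacks a workable idea.
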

The aim of this short note is to provide an
improvement of Theorem \ref{bala}.

\section{The main result}

This section concerns the main result of the note; as follows.\\
In Theorem \ref{bala}, the statement $|V(G)| \leq 2d+2-2i$ 
is equivalent to $\left\lceil \frac{|V(G)|-2}{2} \right\rceil  \leq d-i$. 
Therefore, the upper bound in this Theorem, which is $d-i$, is greater than or equal to 
$\left\lceil \frac{|V(G)|-2}{2} \right\rceil$. In the next theorem, we provide
a sharp upper bound for the b-chromatic number of Kneser graphs, which is asymptotic to $\frac{|V(G)|}{3}$.

\begin{theorem}{ \label{Themain} For fixed $n\geq 2$, the Kneser graph $G_{k}:=KG(2n+k,n)$ satisfies
\begin{center}{
$\varphi (G_{k}) \leq (1+o(1))\frac{|V(G_{k})|}{3},$
}\end{center}
where the $o(1)$ term tends to zero as $k$ tends to infinity.
}
\end{theorem}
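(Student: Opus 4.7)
The plan is to bound $k^\ast := \varphi(G_k)$ by analyzing the sizes of the color classes of an extremal b-coloring. First I would fix a b-coloring $f$ of $G_k$ with $k^\ast$ colors, pick a color-dominating vertex $v_j\in C_j$ in each class, and let $s_c$ denote the number of color classes of size exactly $c$. Splitting off the contributions of the smallest classes and using $\sum_{c\ge 3}c\,s_c\ge 3(k^\ast-s_1-s_2)$ gives
\[
|V(G_k)|\;=\;\sum_c c\,s_c\;\ge\;s_1+2s_2+3(k^\ast-s_1-s_2)\;=\;3k^\ast-2s_1-s_2,
\]
so it suffices to show $s_1+s_2=O(k)$, since $|V(G_k)|=\binom{2n+k}{n}$ is of order $k^n$ with $n\ge 2$ fixed.

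The bound on $s_1$ is the usual observation: if $C_j=\{v_j\}$ is a singleton, then every other color-dominating vertex $v_i$ must be adjacent to $v_j$ (otherwise $v_i$ could not see color $j$ in its closed neighborhood), so the singleton dominating vertices form a clique, and therefore $s_1\le\omega(G_k)=\lfloor(2n+k)/n\rfloor$.

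The main obstacle, and the key new input, is bounding $s_2$. For a two-element class $C_j=\{v_j,w_j\}$, independence in $KG(2n+k,n)$ forces $X_j:=v_j\cap w_j\neq\varnothing$. For any $i\neq j$, color-dominance of $v_i$ requires color $j$ to appear in $N(v_i)$, so $v_i$ must be adjacent to $v_j$ or to $w_j$, i.e.\ disjoint from at least one of them. If $v_i$ met $X_j$ it would meet both $v_j$ and $w_j$, a contradiction; hence $v_i\cap X_j=\varnothing$ for every $i\neq j$. Applying this to every size-two class yields the crucial structural fact that the family $\{X_j:|C_j|=2\}$ consists of pairwise disjoint subsets of $\{1,\ldots,2n+k\}$, because a common element $a\in X_j\cap X_\ell$ would lie in $v_\ell$ and contradict $v_\ell\cap X_j=\varnothing$. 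Since $|X_j|\ge 1$, this gives $s_2\le 2n+k$.

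Combining the two estimates yields $k^\ast\le(|V(G_k)|+O(k))/3=(1+o(1))|V(G_k)|/3$ as $k\to\infty$, which is the claim. All the real work is in recognising the rigidity imposed on size-two color classes by the Kneser adjacency relation; once the intersections $X_j$ are shown to be pairwise disjoint the conclusion drops out by pigeonhole.
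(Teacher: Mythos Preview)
Your proof is correct and uses essentially the same idea as the paper: the pairwise intersections attached to small color classes are disjoint nonempty subsets of $\{1,\ldots,2n+k\}$, so there are at most $2n+k$ such classes, while every remaining class has size at least $3$. The paper packages this a bit more uniformly by defining $S^{\circ}:=\bigcap_{A\in S}A$ for \emph{every} color class $S$, proving that the sets $S^{\circ}$ are pairwise disjoint (exactly your argument for $X_j$), and observing that $S^{\circ}=\varnothing$ forces $|S|\ge 3$; this absorbs your separate clique bound on $s_1$ into the same disjointness count and yields the single inequality $|\mathcal{C}|\le\frac{|V(G_k)|+2(2n+k)}{3}$.
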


\begin{proof}{
Let $\mathcal{C}$ be the set of color classes of an arbitrary b-coloring
of $G_{k}$. For each color class $S$ in $\mathcal{C}$, we set 
$S^{\circ}:=\bigcap_{A\in S} A$; and call $S$ a {\it non-intersecting} color class
whenever $S^{\circ} = \varnothing$. Let us denote by $\mathcal{I}$ the set
$\{S\in \mathcal{C}\ |\ S^{\circ} \neq \varnothing\}$.

Consider two distinct color classes $S$ and $T$ in $\mathcal{C}$; and let 
$\hat{S}$ be a color-dominating vertex of $S$. The vertex $\hat{S}$ is 
adjacent to a vertex of $T$, say $T_{1}$. So, $\hat{S} \cap T_{1} = \varnothing$.
Since $S^{\circ} \cap T^{\circ} \subseteq \hat{S} \cap T_{1}$, we have 
$S^{\circ} \cap T^{\circ} = \varnothing$. This shows that the function 
$f:\mathcal{I} \rightarrow \{1,2,\ldots ,2n+k\}$ that assigns the minimum of $S^{\circ}$ to every
$S$ in $\mathcal{I}$, is an injective mapping. Therefore, $|\mathcal{I}| \leq 2n+k$.

Each non-intersecting color class of $\mathcal{C}$ contains at least three vertices of
$G_{k}$. Hence, $|\mathcal{C}|-|\mathcal{I}| \leq \frac{|V(G_{k})|-|\mathcal{I}|}{3}$.
Accordingly, $|\mathcal{C}| \leq \frac{|V(G_{k})|+2|\mathcal{I}|}{3} \leq 
\frac{|V(G_{k})|+2(2n+k)}{3}$. Now, since
$\lim_{k \to \infty} \frac{2(2n+k)}{{2n+k \choose n}}=0$, we conclude that
$\varphi (G_{k}) \leq (1+o(1))\frac{|V(G_{k})|}{3}$, which is desired.
}
\end{proof}

In view of the proof of Theorem \ref{Themain},
we proved that for any fixed positive integer $n$, the b-chromatic number of Kneser graph
$G_{k}:=KG(2n+k,n)$ is less than or equal to $\mathcal{U}(G_{k}):=\frac{|V(G_{k})|+2(2n+k)}{3}$.
The upper bound $\mathcal{U}(G_{k})$ is sharp for $n=1$.

Let us regard an arbitrary integer $n\geq 2$ as fixed.
Asymptotically in $k$, the bound $\mathcal{U}(G_{k})$ is $\frac{|V(G_{k})|}{3}$.
In \cite{jav}, Javadi and Omoomi showed 
that for $n=2$, the b-chromatic number of $G_{k}$ is asymptotic to $\frac{|V(G_{k})|}{3}$.
Hence, for $n=2$, this bound is asymptotically correct; i.e., the ratio
$\frac{\varphi (G_{k})}{\mathcal{U}(G_{k})}$
goes to $1$ as $k$ tends to infinity.

Since $d+1$ is an upper bound for the b-chromatic number of any $d$-regular graph,
it is worth pointing out that for a fixed positive 
integer $n\geq 2$, if $d_{k}$ denotes the degree of any vertex of $G_{k}$,
then the upper bound in Theorem \ref{Themain} is asymptotically $\frac{d_{k}}{3}$, because the ratio 
$\frac{d_{k}}{|V(G_{k})|}$ tends to $1$ as $k$ tends to infinity.


\begin{thebibliography}{7}

\bibitem{bala} R. Balakrishnan, T. Kavaskar, b-coloring of Kneser graphs, {\em Discrete Appl. Math.}\ {\bf
160} (2012), 9-14.

\bibitem{haji} H. Hajiabolhassan, On the b-chromatic number of Kneser graphs, {\em Discrete Appl. Math.}\ {\bf
158} (2010), 232-234.

\bibitem{irv} R. W. Irving, D. F. Manlove, The b-chromatic number of a graph, {\em Discrete Appl. Math.}\ {\bf
91} (1999), 127-141.

\bibitem{survey} M. Jakovac, I. Peterin, The b-chromatic number and related topics - {A} survey,
 {\em Discrete Appl. Math.}\ {\bf 235} (2018), 184-201.

\bibitem{jav} R. Javadi, B. Omoomi, On b-coloring of the Kneser graphs, {\em Discrete Math.}\ {\bf
309} (2009), 4399-4408.

\bibitem{kratv} J. Kratochv\'{\i}l, Z. Tuza, M. Voigt, On the b-chromatic number of graphs, 
{\em In WG $02$: Graph-Theoretic Concepts Comput. Sci., Lecture Notes in Comput. Sci.}\ {\bf 2573} (2002),
310-320.

\bibitem{sha} S. Shaebani, On b-continuity of Kneser graphs of type $KG(2k+1,k)$, {\em Ars Combin.}\ {\bf
119} (2015), 143-147.

\end{thebibliography}
\end{document}